\newtheoremstyle{thm}
{}
{}
{\itshape}
{}
{\bf}
{.}
{ }
{}
\newtheoremstyle{text}
{}
{}
{\upshape}
{}
{\bf}
{.}
{ }
{}
\theoremstyle{thm}
\newtheorem{thm}{Theorem}[section]
\newtheorem{lem}[thm]{Lemma}
\theoremstyle{text}
\newtheorem{que}[thm]{Question}
\newcommand{\Z}{\mathbb{Z}}
\newcommand{\Q}{\mathbb{Q}}
\newcommand{\C}{\mathbb{C}}
\newcommand{\HP}{\mathbb{H}\mathrm{P}}
\newcommand{\CP}{\mathbb{C}\mathrm{P}}
\newcommand{\T}{\mathrm{T}}
\newcommand{\SU}{\mathrm{SU}}
\newcommand{\U}{\mathrm{U}}
\newcommand{\Sph}{\mathrm{S}}
\begin{document}   

\thispagestyle{empty}
\title[Nneg. curvature, elliptic genus and unbounded Pontryagin numbers]{Nonnegative curvature, elliptic genus and\\ unbounded Pontryagin numbers}
\author{Martin Herrmann} 
\address{Martin Herrmann \\Fakultät für Mathematik \\Karlsruher Institut für Technologie \\Kaiserstraße 89--93 \\76133 Karlsruhe, Germany}
\curraddr{Martin Herrmann \\Mathematisches Institut\\Westf\"alische Wilhelms-Universit\"at M\"unster\\Einsteinstr. 62\\48149 Münster, Germany}
\email{martin.herrmann@uni-muenster.de}

\author{Nicolas Weisskopf}
\address{Nicolas Weisskopf 
\\Département de mathématiques
\\Université de Fribourg
\\Chemin du Musée 23
\\1700 Fribourg, Switzerland}
\curraddr{Nicolas Weisskopf 
\\Court Chemin 6
\\1756 Onnens, Switzerland
}
\email{nic.weisskopf@gmail.com}
\thanks{The second author was partially supported by SNF Grant No. 200020-149761}
\subjclass[2010]{Primary 53C20; Secondary 57R75}
\keywords{Nonnegative curvature, spin manifolds, elliptic genus, Pontryagin numbers}
\begin{abstract}
We discuss the cobordism type of spin manifolds with  nonnegative sectional curvature. We show that in each dimension $4k \geq 12$, there are infinitely many cobordism types of simply connected and nonnegatively curved spin manifolds. Moreover, we raise and analyze a question about possible cobordism obstructions to nonnegative curvature.
\end{abstract}
\maketitle
\section{Introduction}
Finding obstructions to nonnegative or positive curvature on closed mani\-folds has a long tradition in Riemannian geometry. In the present article we want to deal with the question, which rational cobordism invariants can be seen as obstructions to nonnegative curvature. One such obstruction is the signature, which is bounded on connected, nonnegatively curved and oriented manifolds by Gromov's Betti number theorem. 

Normalizing the diameter and imposing an additional upper curvature bound restricts by Chern-Weil theory the Pontryagin numbers and therefore the possible oriented cobordism classes to finitely many possibilities. Nevertheless, Dessai and Tuschmann \cite{DW07} proved  that in all dimensions $4k\geq8$, there are infinitely many oriented cobordism types of simply connected and nonnegatively curved manifolds. We generalize this result to spin manifolds of nonnegative sectional curvature.

\begin{thm} \label{infinite cobordism type}
In every dimension $4k \geq 12$, there are infinitely many cobordism types of simply connected, closed, spin manifolds of nonnegative sectional curvature.
\end{thm}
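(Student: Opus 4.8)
The plan is to exhibit, for every $k\ge 3$, an explicit infinite family of simply connected closed spin $4k$-manifolds carrying nonnegative sectional curvature on which a single Pontryagin number is unbounded. Since spin-cobordant manifolds are in particular oriented-cobordant and therefore have the same Pontryagin numbers, such a family automatically represents infinitely many spin cobordism types. For the family I would take, for each even integer $n$, the projectivization
\[
M_n^{4k} \;=\; \mathbb{P}\bigl(\mathcal{O} \oplus \mathcal{O}(n)^{\oplus 3}\bigr)
\]
of a holomorphic line bundle and three copies of its $n$-th power over $\CP^{\,2k-3}$, with bundle projection $\pi$; its real dimension is $2(2k-3)+2\cdot 3 = 4k$.

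First I would check the structural properties. As a $\CP^3$-bundle over the simply connected base $\CP^{\,2k-3}$, $M_n$ is closed and simply connected. It carries a natural almost complex structure with $TM_n = \pi^*T\CP^{\,2k-3}\oplus T_{\mathrm{vert}}$; writing $x\in H^2(M_n)$ for the pullback of the hyperplane class and $t = c_1(\mathcal{O}_{\mathbb{P}}(1))$, one has $c_1(TM_n) = (2k-2)x + (4t + 3nx) = (2k-2+3n)x + 4t$, so that $w_2(M_n) = c_1(TM_n)\bmod 2 = nx$ in $H^2(M_n;\Z/2) = \Z/2\,x\oplus\Z/2\,t$; this vanishes precisely when $n$ is even, which is why we take $n$ even. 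For nonnegative curvature I would realize $M_n$ as a biquotient: writing $\CP^{\,2k-3} = S^{4k-5}/S^1$ and $\CP^3 = S^7/S^1$ as Hopf quotients and lifting the weight $(0,n,n,n)$ circle action that defines the bundle to $S^7$, one obtains $M_n = (S^{4k-5}\times S^7)/T^2$ for a free $T^2$-action, hence a biquotient of the compact Lie group $\U(2k-2)\times\U(4)$; biquotients inherit nonnegative curvature from a bi-invariant metric via an O'Neill Riemannian submersion. These manifolds are close in spirit to the oriented examples of Dessai–Tuschmann, the extra line-bundle summand being what the spin condition forces on us.

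The core of the proof is the characteristic-number computation. In the ring $H^*(M_n;\Q) = \Q[x,t]/\bigl(x^{2k-2},\, t(t+nx)^3\bigr)$, with $c(T_{\mathrm{vert}}) = (1+t)(1+t+nx)^3$, one has
\[
p(TM_n) \;=\; (1+x^2)^{2k-2}\,(1+t^2)\,\bigl(1+(t+nx)^2\bigr)^3 .
\]
Setting $v = t+nx$, the relation becomes $v^4 = nxv^3$, and a short computation shows the degree-$12$ component of $(1+t^2)(1+v^2)^3$ to be $n^3 x^3 v^3$, while all other contributions to the top cohomological degree $4k$ are killed by $x^{2k-2}=0$; evaluating on $[M_n]$ one finds
\[
p_k[M_n] \;=\; \pm\binom{2k-2}{k-3}\, n^3 ,
\]
which is nonzero and unbounded as $n\to\infty$ through even values. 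Consistently with Gromov's Betti number theorem the signature $L_k[M_n]$ remains bounded (in fact it should come out identically zero), so the unbounded invariant is transverse to the signature obstruction.

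The step I expect to be delicate is precisely the bundle bookkeeping that makes (i) the dimension equal $4k$, (ii) $w_2$ vanish, and (iii) the top Pontryagin class survive the cohomology relations, all at once. Conditions (ii) and (iii) pull against each other: imposing $w_2=0$ forces an even-rank fibre, and for even-rank fibres the more symmetric projectivizations — e.g.\ $\mathbb{P}(\mathcal{O}^{\oplus 2}\oplus\mathcal{O}(n)^{\oplus 2})$ over $\CP^3$ — turn out to be null-cobordant, forcing the ``asymmetric'' choice above. This friction costs one dimension step relative to the oriented setting, which is the structural reason the bound is $4k\ge 12$ here rather than $4k\ge 8$.
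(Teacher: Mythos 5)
Your proposal is correct, and the computation checks out: with $v = t+nx$ one has $a^2v^3=0$ and $v^4=nxv^3$ in $H^*(M_n)$, so the degree-$12$ part of $(1+t^2)(1+v^2)^3$ reduces to $v^6 = n^3x^3v^3$, the degree-$8$ part reduces to $3n^2x^2v^2$ (killed by $x^{2k-2}=0$ after multiplying by $x^{2k-4}$), and the only surviving contribution to $p_k$ is $\binom{2k-2}{k-3}n^3\,x^{2k-3}v^3$, giving $p_k[M_n]=\pm\binom{2k-2}{k-3}n^3$ — indeed for $k=5$ this recovers the paper's coefficient $\pm 28$. The approach is genuinely different from the paper's in the range $4k\ge 20$. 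The paper proves Theorem~\ref{infinite cobordism type} in dimensions $12$ and $16$ by invoking Theorem~\ref{thmquestionsle20} for the explicit families $X^{12}_{2c}$ and $Y^{16}_c$, and then treats all $4k\ge 20$ at once by taking products $X^{12}_{2c}\times\HP^{k-3}$, using that $\Omega^{\mathrm{SO}}_*\otimes\Q$ is a polynomial ring so that multiplication by $[\HP^{k-3}]$ is injective. You instead construct a single uniform family $\mathbb{P}(\mathcal{O}\oplus\mathcal{O}(n)^{\oplus 3})\to\CP^{2k-3}$ for all $k\ge 3$ (which agrees with the paper's $X^{12}_c$ and $Z^{20}_c$ up to the twist $\mathbb{P}(E)\cong\mathbb{P}(E\otimes L)$, but not with $Y^{16}_c$), and you separate out a single unbounded Pontryagin number $p_k$ by a direct calculation, so you never need Theorem~\ref{thmquestionsle20} or the polynomial-ring argument. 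The paper's route gives Theorem~\ref{infinite cobordism type} as a corollary of the sharper unboundedness statement and avoids doing the characteristic-number calculation in arbitrary dimension; your route is more self-contained, yields an explicit closed form $p_k[M_n]=\pm\binom{2k-2}{k-3}n^3$ valid in all relevant dimensions, and keeps the examples irreducible rather than products with $\HP^{k-3}$. Both rely on the same biquotient realization $(S^{4k-5}\times S^7)/T^2$ for nonnegative curvature and the same Wu-class criterion (spin iff $c_1(E)$ even) for the spin condition.
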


Moreover, Kotschick \cite{Kot10} used slight generalizations of the examples given by Dessai and Tuschmann to show that every linear combination of Pontryagin numbers that is not a multiple of the signature is unbounded on oriented manifolds of nonnegative sectional curvature. 

We emphasize that the spin case is more difficult to treat, since there are index-theoretic obstructions. In fact, for spin manifolds with nonnegative curvature the $\hat{A}$-genus, which is the index of the Dirac operator,  vanishes by a Lichnerowicz type argument (cf. \cite{Lott00} in a more general setting). It follows that the lower bound on the dimension in Theorem \ref{infinite cobordism type} is optimal, since in dimensions 4 and 8, every Pontryagin number is a linear combination of the signature and the $\hat{A}$-genus.

Both the signature and the $\hat{A}$-genus can be seen as the first coefficient of different expansions of the elliptic genus \cite{HBJ92}. We recall that the elliptic genus $\phi(M)$ of a spin manifold $M^{4k}$ is a modular function, which takes the value of the signature in one of its cusps. In the other cusp, the elliptic genus admits the $q$-expansion
\begin{eqnarray*}
\phi(M) &=& q^{-k/2} \cdot \hat{A}(M; \bigotimes_{\substack{n \; \textrm{odd} \\ n \geq 1}}^{\infty}\Lambda_{-q^n} T_{\mathbb{C}}M \otimes \bigotimes_{\substack{n \; \textrm{even} \\ n \geq 1}}^{\infty} S_{q^n} T_{\mathbb{C}}M) \\
 &=& q^{-k/2} \cdot (\hat{A}(M) - \hat{A}(M; T_{\C} M) q \pm \ldots ) \; \in q^{-k/2} \Z[q].
\end{eqnarray*}
This expansion can be taken as a definition for the elliptic genus. 

One might ask whether the elliptic genus is constant on spin manifolds of nonnegative  sectional curvature.  For positive sectional curvature this question was raised by Dessai \cite{Des07}. To our knowledge, this question is still open.

Some evidence for a positive answer to this question is provided by the following results. First, the constancy of the elliptic genus has been shown by Hirzebruch and Slodowy \cite{HS90} in the case of homogeneous spaces. For biquotients Singhof \cite{Sin93} gave some partial results. Moreover, several results were obtained on the vanishing of the  coefficients  of the elliptic genus in the context of isometric torus actions and positive sectional curvature by Dessai  \cite{Des05}, \cite{Des07} and the second-named author \cite{Wei13}.

Since it is not evident whether the elliptic genus obstructs nonnegative curvature, we would like to raise the following question. It can be thought of as a direct analogue of Kotschick's result for spin manifolds, where we replace the signature by the elliptic genus.

 \begin{que}\label{queunbondedpontryagin}
Let $f: \Omega_{4k}^{\textrm{SO}} \otimes \Q \rightarrow \Q$ be a linear combination of Pontryagin numbers, which is not contained in the span of the coefficients of the elliptic genus. Is $f$ unbounded on connected, nonnegatively curved spin $4k$-manifolds?
\end{que} 

A positive answer to this question would imply that the elliptic genus is the only possible obstruction to nonnegative curvature on spin manifolds from the point of view of rational oriented cobordism. Here, we prove that Question \ref{queunbondedpontryagin} admits a positive answer in dimensions up to 20.
\begin{thm} \label{thmquestionsle20}
For $k\leq5$ every linear combination $f: \Omega_{4k}^{\textrm{SO}} \otimes \Q \rightarrow \Q$ of Pontryagin numbers that is not contained in the span of the coefficients of the elliptic genus is unbounded on simply connected, closed,  spin $4k$-manifolds of nonnegative sectional curvature.
\end{thm}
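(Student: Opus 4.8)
The plan is to reduce the theorem to a finite linear-algebra problem over $\Q$ and then to settle that problem for $k\le 5$ by exhibiting enough nonnegatively curved spin manifolds. Write $V_k:=\Omega_{4k}^{\textrm{SO}}\otimes\Q$; it has dimension equal to the number of partitions of $k$, and the Pontryagin numbers form a basis of $V_k^{*}$. The elliptic genus is a ring homomorphism $\phi\colon\Omega_{*}^{\textrm{SO}}\otimes\Q\to M_{*}(\Gamma_0(2))$ onto the ring of modular forms for $\Gamma_0(2)$, which is freely generated by forms of weights $2$ and $4$; hence $\phi(V_k)=M_{2k}(\Gamma_0(2))$ has dimension $\lfloor k/2\rfloor+1$, and the span $W_k\subseteq V_k^{*}$ of the coefficients of the elliptic genus is precisely the annihilator of $\ker\phi$, a subspace of $V_k$ of dimension $\dim V_k-\lfloor k/2\rfloor-1$ (equal to $1$, $2$, $4$ for $k=3,4,5$). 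Since any non-constant polynomial map from (a coset in) $\Z^{m}$, $m\ge 1$, to $\Q$ is unbounded, the theorem follows once we produce finitely many polynomially parametrized families $\mathcal{F}_1,\dots,\mathcal{F}_r$ of simply connected, closed, nonnegatively curved spin $4k$-manifolds for which $D:=\sum_{i}\operatorname{span}\{[M]-[M']\ :\ M,M'\in\mathcal{F}_i\}\subseteq V_k$ contains $\ker\phi$: indeed, any $f\notin W_k$ then does not annihilate $\ker\phi\subseteq D$, hence is non-constant, hence unbounded, on at least one of the $\mathcal{F}_i$.

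The families are built from associated bundles, generalizing the constructions of Dessai--Tuschmann \cite{DW07} and Kotschick \cite{Kot10}. If $B$ is a closed simply connected nonnegatively curved spin manifold, $F=G/H$ is a compact homogeneous space of nonnegative curvature with $\operatorname{rk}G=\operatorname{rk}H$, and $Q\to B$ is a principal $G$-bundle carrying a $G$-invariant metric of nonnegative curvature, then the associated bundle $E=Q\times_{G}F$ inherits nonnegative curvature (equip $Q\times F$ with a product metric and apply O'Neill's formula to the free isometric diagonal $G$-action whose quotient is $E$), is simply connected when $F$ is, and is spin when $B$ and $F$ are spin and an explicit (and often vacuous) mod-$2$ condition on the characteristic classes of $Q$ holds. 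Taking those characteristic classes as parameters gives polynomial families; the geometric input needed is that over bases such as $\CP^{n}$, products of complex projective spaces, and $\Sph^{4}$ one has, via the Hopf fibrations and their free torus quotients together with the Grove--Ziller construction over $\Sph^{4}$, infinitely many such $Q$ whose characteristic classes span $H^{*}(B;\Q)$ in all relevant degrees (after imposing the parity condition). For the fibers to be used, $F\in\{\Sph^{2},\CP^{n},\HP^{n},\dots\}$, the Hirzebruch--Slodowy multiplicativity theorem for elliptic genera of bundles with homogeneous equal-rank fibre \cite{HS90} gives $\phi(E)=\phi(B)\cdot\phi(F)$, so $\phi$ is constant along each family and $D\subseteq\ker\phi$ automatically; it therefore suffices to check that $\dim D=\dim V_k-\lfloor k/2\rfloor-1$. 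The Pontryagin numbers of $E$ are computed in the standard way from $TE\cong\pi^{*}TB\oplus T^{\mathrm{vert}}E$ and the projection formula, and emerge as explicit polynomials in the parameters.

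It remains to carry this out in dimensions $12$, $16$ and $20$. For $k=3$ a single family — e.g. $\HP^{2}$-bundles over $\Sph^{4}$ (for which $\operatorname{rk}Sp(3)=\operatorname{rk}(Sp(2)\times Sp(1))$, and the total space is automatically spin and simply connected), or $\Sph^{2}$-bundles over $\CP^{5}$ — already detects the one missing direction once one checks that the relevant Pontryagin number is non-constant in the parameter; $k=4$ is only mildly harder. The case $k=5$ is where the work lies: one must assemble, from $\Sph^{2}$-, $\CP^{3}$- and $\HP^{2}$-bundles over $\CP^{n}$, over products of complex projective spaces and over $\Sph^{4}$, together with products of lower-dimensional families with the fixed nonnegatively curved spin manifolds $\Sph^{4}$, $\HP^{2}$ and $\CP^{3}\times\CP^{3}$, enough examples that their Pontryagin-number vectors span a $4$-dimensional complement of $W_5$ in $V_5$. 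I expect this last bookkeeping to be the main obstacle, and the reason the argument stops at dimension $20$: the dimension of the complement of $W_k$ grows with $k$, while the supply of admissible examples is limited by the requirements that the underlying principal bundles themselves carry nonnegative curvature and that the total spaces be spin, and the conceptual part of the argument (the reduction above and the rigidity/multiplicativity input) is uniform in $k$. A minor technical point to keep in mind throughout is that each family must be parametrized by (a coset in) some $\Z^{m}$ with $m\ge 1$, so that boundedness of a Pontryagin-number functional on the family is equivalent to its being constant there.
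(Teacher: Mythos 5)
Your conceptual reduction is the same as the paper's: the span $W_k$ of elliptic-genus coefficients has dimension $\lfloor k/2\rfloor+1$ (so $\dim\ker\phi = 1,2,4$ for $k=3,4,5$, as you say), multiplicativity of $\phi$ in spin fibre bundles forces the cobordism classes of any such family to lie in $\ker\phi$, and it then suffices to produce polynomially parametrized families of nonnegatively curved spin manifolds whose differences span all of $\ker\phi$, since any $f\notin W_k$ is then a non-constant polynomial on some family and hence unbounded. Your class of constructions (associated bundles over nonnegatively curved bases, with the nonnegative curvature descending via O'Neill) is essentially the same mechanism the paper uses, although the paper needs much less of the arsenal you set up: it sticks to a single uniform construction, projectivizations $P(E)$ of sums of line bundles $E\to\CP^{2l+1}$, realized as free $\T^2$-quotients of $\Sph^{2l+1}\times\Sph^{2k-1}$, and always uses fibre $\CP^{2k+1}$; no Grove--Ziller bundles over $\Sph^4$ or varied homogeneous fibres are required. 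The paper also invokes Ochanine's multiplicativity in spin fibre bundles rather than Hirzebruch--Slodowy's equal-rank version, though for these examples either would do.

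The genuine gap is that you stop exactly where the content of the theorem begins. You explicitly write ``It remains to carry this out in dimensions 12, 16 and 20'' and ``I expect this last bookkeeping to be the main obstacle,'' but that bookkeeping is the proof. Concretely: (i) you never commit to specific families; for $k=3$ you hedge between $\HP^2$-bundles over $\Sph^4$ and $\Sph^2$-bundles over $\CP^5$, whereas the paper uses $P((c\cdot\gamma^1)\oplus\epsilon^3)\to\CP^3$ with $c$ even; (ii) you compute no Pontryagin numbers, whereas the paper records them explicitly (Lemmas \ref{lemdimtwelve}--\ref{lemdim20}); (iii) you do not verify the spanning/independence condition, which for $k=5$ is a non-trivial linear-algebra check that the coefficient vectors of $c^7,c^5,c^3$ in $f([Z^{20}_{2c}])$ together with those of $c^3,c^2,c$ in $f([X^{12}_{2c}\times\HP^2])$ force $\lambda_4=\lambda_5=\lambda_6=\lambda_7=0$. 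Without choosing the families and doing the computations there is no way to know whether a given choice actually spans $\ker\phi$ --- indeed the paper needs a second family in dimension $20$ precisely because one family does not suffice --- so the proposal as written does not establish the theorem.
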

In dimensions 4 and 8, this theorem is trivial, since any linear combination of Pontryagin numbers in these dimensions lies in the span of the signature and the $\hat{A}$-genus and thus also in the span of the coefficients of the elliptic genus. Note that the proof of Kotschick's theorem involves the construction of a basis sequence for the rational cobordism ring consisting of nonnegatively curved manifolds. In the spin case treated here, this is not possible, since in dimension 4, any nonnegatively curved spin manifold is rationally nullbordant due to the only Pontryagin number $p_1[M^4]$ being a multiple of the $\hat{A}$-genus.

We will prove these theorems by computing the cobordism type of certain spin projective bundles over complex projective spaces.
\medskip

The paper  is structured as follows. In Section 2 we introduce the relevant families of projective bundles and discuss the cobordism type as well as the curvature properties. In Section 3 we give the proofs of the theorems.

\subsection*{Acknowledgements} These results were part of the second-named author's Ph.D. thesis \cite{Wei14}. It is a pleasure for him to thank  his advisor Anand Dessai for his support and for many helpful discussions. Most of the research was carried out in joint work during a visit of the second-named author to the Karlsruhe Institute of Technology in January 2014.

\section{Projective bundles over complex projective spaces}

For the proof of Theorem \ref{thmquestionsle20} we need to construct some nonnegatively curved families of spin manifolds, whose Pontryagin numbers are mutually distinct. For this purpose we consider the projectivization of complex vector bundles of rank $2k+2$ over the base  $\CP^{2l+1}$.  In our case, the vector bundles decompose into a sum of complex line bundles. It turns out that this construction yields a suitable family for the proof of Theorem \ref{thmquestionsle20}.

\subsection{Construction of the families}

We start with a complex vector bundle $E$ of rank $2k+2$ over $\CP^{2l+1}$. Let
\[ c(E) = 1 + c_1(E) + \ldots + c_{2k+2}(E) \in H^{\ast}(\CP^{2l+1}; \Z)\]
be the total Chern class  of the vector bundle $E$. We take the projectivization $P(E)$ with respect to $E$  and obtain a fibre bundle 
\[ \CP^{2k+1} \hookrightarrow P(E) \twoheadrightarrow \CP^{2l+1}.\]
It follows from the Leray-Hirsch Theorem  that the cohomology  ring $H^{\ast}(P(E); \Z)$ is generated as a free $H^{\ast}(\CP^{2l+1};\Z)$-module by an element $a \in H^{2}(P(E); \Z)$ subject to the following relation
\[ a^{2k+2} + a^{2k+1} c_1(E) + \ldots + c_{2k+2}(E) = 0.\]
For the notation we fix $b \in H^2(\CP^{2l+1}; \Z)$ to be a generator. 
\medskip

Next we are concerned with the spin structures of $P(E)$. We recall that a closed oriented manifold is spin if and only if its second Stiefel-Whitney class vanishes. The latter constitutes a homotopy invariant and we may compute it via the Wu formula \cite{MS74}. Alternatively, one could also apply the techniques of Borel and Hirzebruch \cite{BH58} to determine the Stiefel-Whitney classes.

\begin{lem}
$P(E)$ is spin if and only if $c_1(E)$ is even.
\end{lem}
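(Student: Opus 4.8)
The plan is to compute $w_2(P(E))$ directly from a splitting of the tangent bundle and to reduce the problem to a congruence for $c_1(E)$ modulo $2$. Since $P(E)$ is a closed complex (hence oriented) manifold, it is spin precisely when $w_2(P(E)) = 0$, and because $P(E)$ carries a complex structure one has $w_2(P(E)) \equiv c_1(\T P(E)) \pmod 2$. So it suffices to determine $c_1(\T P(E))$ and reduce it mod $2$.

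For the tangent bundle I would use the relative Euler sequence of the projective bundle. Writing $\pi \colon P(E) \to \CP^{2l+1}$ for the projection and $L \subset \pi^{\ast} E$ for the tautological line subbundle, the vertical tangent bundle fits into
\[ 0 \longrightarrow \underline{\C} \longrightarrow L^{\ast} \otimes \pi^{\ast} E \longrightarrow \T_{P(E)/\CP^{2l+1}} \longrightarrow 0, \]
so that $\T P(E) \oplus \underline{\C} \cong \pi^{\ast} \T\CP^{2l+1} \oplus (L^{\ast} \otimes \pi^{\ast} E)$. One has to match this with the Leray–Hirsch description above: the class $a$ is $c_1(L^{\ast}) = c_1(\mathcal{O}_{P(E)}(1))$, which is exactly the class obeying the stated Grothendieck relation. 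Taking first Chern classes, and using $c_1(L^{\ast} \otimes \pi^{\ast} E) = (2k+2)\,a + \pi^{\ast} c_1(E)$ together with $c_1(\T\CP^{2l+1}) = (2l+2)\,b$, I get
\[ c_1(\T P(E)) = (2k+2)\,a + (2l+2)\,\pi^{\ast} b + \pi^{\ast} c_1(E). \]

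Reducing modulo $2$, the first two terms die and $w_2(P(E)) \equiv \pi^{\ast} c_1(E) \pmod 2$. By the Leray–Hirsch theorem, $H^{\ast}(P(E); \Z/2)$ is a free module over $H^{\ast}(\CP^{2l+1}; \Z/2)$, so $\pi^{\ast}$ is injective in degree $2$; hence $w_2(P(E)) = 0$ if and only if the mod $2$ reduction of $c_1(E)$ vanishes in $H^2(\CP^{2l+1}; \Z/2)$. Since $H^2(\CP^{2l+1}; \Z) \cong \Z\langle b\rangle$, this is precisely the condition that $c_1(E) \in 2\,H^2(\CP^{2l+1}; \Z)$, i.e. that $c_1(E)$ is even.

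I expect the only genuine subtlety to be the bookkeeping of orientation and twist conventions in the Euler sequence — in particular, checking that the coefficient of $a$ in $c_1(\T P(E))$ is the rank $2k+2$ of $E$ (an even number, which is exactly what makes the argument go through) and that this $a$ is the same generator appearing in the Grothendieck relation. As the discussion preceding the statement indicates, one could equally run the computation through the Wu formula or through the Borel–Hirzebruch splitting-principle formulae for the Stiefel–Whitney classes of a projective bundle; in every case the conclusion is that all contributions to $w_2(P(E))$ except $\pi^{\ast} c_1(E)$ are even, and injectivity of $\pi^{\ast}$ finishes the proof.
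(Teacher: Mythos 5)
Your proof is correct, but it takes a genuinely different route from the one in the paper. The paper computes the second Wu class $v_2$ directly via its defining property $\langle v_2 \cup x, \mu_{P(E)}\rangle = \langle \mathrm{Sq}^2 x, \mu_{P(E)}\rangle$, evaluates $\mathrm{Sq}^2$ on the two generators of the top-degree-minus-two cohomology using the Cartan formula, and then invokes the Wu formula to identify $v_2$ with $w_2$. You instead compute $c_1(\T P(E))$ from the relative Euler sequence (equivalently, from the Borel--Hirzebruch formula for $c(\eta_E)$ that appears a few lines later in the paper), reduce mod $2$ using $w_2 \equiv c_1$ for an almost complex manifold, and finish with injectivity of $\pi^*$ on $H^2(-;\Z/2)$ coming from Leray--Hirsch. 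This is precisely the alternative the paper flags (``one could also apply the techniques of Borel and Hirzebruch'') but does not carry out. Your route is arguably more transparent: it makes visible that the only odd contribution to $w_2$ is $\pi^* c_1(E)$ because the ranks $2k+2$ and $2l+2$ are even, whereas the Wu-formula computation hides this in the Steenrod-square bookkeeping. One small point of hygiene: $P(E)$ need not be an honest complex manifold in the smooth category, but the splitting $\T P(E) = \pi^* \T \CP^{2l+1} \oplus \eta_E$ into complex bundles equips it with an almost complex structure, which is all that $w_2 \equiv c_1 \pmod 2$ requires; you should phrase it that way rather than appealing to ``a complex structure.''
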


\begin{proof}
We need to determine the second Wu class $v_2 \in H^2(P(E); \Z_2)$, which is uniquely characterized by the relation
\[ \langle v_2 \cup x, \mu_{P(E)} \rangle = \langle \mathrm{Sq}^2(x), \mu_{P(E)} \rangle,\]
where $\mu_{P(E)}$ is the fundamental class and $x \in H^{\ast}(P(E); \Z_2)$ is any \mbox{element}. The only relevant  cohomology group $H^{4k+4l+2}(P(E); \Z_2)$ is generated over $\Z_2$ by the two elements $a^{2k}b^{2l+1}$ and $a^{2k+1}b^{2l}$. We use the Cartan formula to compute the Steenrod squares
\[ \mathrm{Sq}^2 (a^{2k}b^{2l+1} ) = 0 \quad \textrm{and} \quad \mathrm{Sq}^2 (a^{2k+1}b^{2l} ) =  c_1(E) \; a^{2k+1}  b^{2l} .\]
Thus, the second Wu class vanishes if and only if $c_1(E)$ is even and so does the second Stiefel-Whitney class by the Wu formula.
\end{proof}

We recall a general recipe for the computation of the Pontryagin classes of $P(E)$. Our approach is based on the techniques of Borel and Hirzebruch. First, we observe that the fibre bundle structure of $P(E)$ induces a splitting of the tangent bundle
\begin{equation} \label{splitting}
 TP(E) \; = \;\pi^{\ast} T \CP^{2l+1} \oplus \eta_E,
\end{equation}
where $\pi^{\ast}T \CP^{2l+1}$ is the pullback bundle induced by the projection and $\eta_E$ is the complex bundle along the fibres. Following \cite[\textit{Section 15.1, p.515}]{BH58} the Chern classes of $\eta_E$  are given by
\[ c(\eta_E) = \sum_{i=0}^{2k+2} (1+a)^{2k+2-i} c_i(E).\]
From here, one can easily deduce the Pontryagin classes of $\eta_E$ and in view of the splitting (\ref{splitting}) the Pontryagin classes of $P(E)$ can be determined via the product formula.
\medskip

Following these general considerations we turn our attention to Theorem \ref{thmquestionsle20} and give explicit families for the relevant dimensions. In dimension 12 we consider complex vector bundles $E^4 \rightarrow \CP^3$ of the type 
\[ E^4 \; = \; (c \cdot \gamma^1) \oplus \epsilon^3 \quad \textrm{for} \; c \in \Z,\]
where $\gamma^1$ denotes the dual Hopf bundle, $c\cdot \gamma^1$ the $c$-fold tensor product of $ \gamma^1$ and $\epsilon^3$  is the trivial vector bundle of rank 3 over $\CP^3$. The total Chern class is then given by
\[ c(E) = 1+ c_1(E) = 1+ c \cdot b \; \in H^{\ast}(\CP^{3}; \Z).\]
As before, we take the projectivization, which we write as $X^{12}_c$. In view of the recipe, we compute the Pontryagin numbers of $X^{12}_c$. 
\begin{lem}\label{lemdimtwelve}
The Pontryagin numbers of $X^{12}_c$ are given by
\[ p_1^3[X^{12}_c] = -8c^3, \quad p_1p_2[X^{12}_c] = -6c^3, \quad p_3[X^{12}_c] = -c^3.\]
\end{lem}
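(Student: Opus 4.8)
The plan is to apply the general recipe from Borel--Hirzebruch recalled just above the statement, specialised to the bundle $E^4 = (c\cdot\gamma^1)\oplus\epsilon^3$ over $\CP^3$. First I would record the cohomology ring of $X^{12}_c = P(E)$: by Leray--Hirsch it is the free $H^*(\CP^3;\Z)$-module on $1,a,a^2,a^3$ subject to $a^4 + a^3 c_1(E) = a^4 + c\,a^3 b = 0$, together with $b^4 = 0$ in the base. The orientation/normalisation I would use is $\langle a^3 b^3, \mu_{X^{12}_c}\rangle = 1$ (this is the standard one for a projective bundle), and then every monomial of top degree reduces to a multiple of $a^3 b^3$ via the relations $a^4 = -c\,a^3 b$ and $b^4 = 0$; in particular $a^4 b^2 = -c\,a^3 b^3$, $a^5 b = c^2 a^3 b^3$, $a^6 = -c^3 a^3 b^3$, while any monomial with $b$-degree $\geq 4$ vanishes.

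Next I would compute the Pontryagin class of $X^{12}_c$ using the splitting $TP(E) = \pi^* T\CP^3 \oplus \eta_E$ from \eqref{splitting}. Since $p(T\CP^3) = (1+b^2)^4 = 1 + 4b^2$ (using $b^4=0$), the base contributes $p_1(\pi^*T\CP^3) = 4b^2$ and no $p_2,p_3$. For the fibre bundle, the Borel--Hirzebruch formula gives $c(\eta_E) = \sum_{i=0}^{4}(1+a)^{4-i}c_i(E) = (1+a)^4 + (1+a)^3 c\,b$ since $c(E) = 1 + c\,b$. Expanding and collecting, one gets the Chern roots / Chern classes of $\eta_E$ explicitly as polynomials in $a,b$, from which $p(\eta_E) = \prod(1 - x_j^2)$ (equivalently $p_j(\eta_E)$ via the Newton-type identities $p_i = c_i^2 - 2c_{i-1}c_{i+1} + \dots$ applied to $\eta_E$). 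Then the Whitney product formula yields $p(X^{12}_c) = (1+4b^2)\,p(\eta_E)$, and I extract $p_1, p_2, p_3 \in H^*(X^{12}_c;\Z)$ as concrete polynomials in $a$ and $b$.

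Finally I would form the three degree-$12$ monomials $p_1^3$, $p_1 p_2$, $p_3$, reduce each to a multiple of $a^3 b^3$ using $a^4 = -c\,a^3 b$ and $b^4 = 0$, and evaluate on $\mu_{X^{12}_c}$ to read off the numbers. The factorisation through $c^3$ is structurally forced: each $p_i$ is homogeneous of cohomological degree $4i$ built from $a$ (degree $2$) and $c\,b$ (the only place $c$ enters, degree $2$), so in top degree $12$ every surviving term carries total $b$-degree $3$, hence exactly three factors of $c$; this is a useful sanity check. The main obstacle is simply bookkeeping: getting the expansion of $c(\eta_E) = (1+a)^4 + c\,b(1+a)^3$ right, converting Chern classes to Pontryagin classes correctly (watching signs, and remembering $\eta_E$ is a \emph{complex} rank-$3$ bundle so $p(\eta_E) = c(\eta_E)\cdot\overline{c(\eta_E)}$ up to sign, i.e.\ $p_i(\eta_E) = (-1)^i c_{2i}(\eta_E\otimes_\R\C)$), and then carefully performing the top-degree reductions. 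There is no conceptual difficulty; I would double-check the final coefficients $-8c^3, -6c^3, -c^3$ by an independent evaluation, e.g.\ substituting a small value such as $c=1$ and cross-checking against the known Pontryagin numbers of $P((\gamma^1)\oplus\epsilon^3)$ over $\CP^3$.
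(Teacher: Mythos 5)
Your proposal follows exactly the Borel--Hirzebruch recipe that the paper itself invokes just before the lemma (the paper does not write out the computation, only states the result), so the approach is the same and the computation is sound. One caveat: your structural ``sanity check'' for the $c^3$ factor is not quite right, since the base contributes $p(\pi^*T\CP^3)=1+4b^2$, so $b$'s appear in $p(X^{12}_c)$ without an attached $c$; the total $b$-degree being $3$ does not by itself force three factors of $c$ (an a priori argument, via the orientation-reversing fibrewise diffeomorphism $X^{12}_{-c}\to X^{12}_c$ covering complex conjugation of $\CP^3$, gives only that the Pontryagin numbers are odd in $c$), and the factor $c^3$ instead emerges from the cancellations in the explicit reduction to $a^3b^3$.
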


In dimension 16 we take complex vector bundles $E^4 \rightarrow \CP^5$ of the type
\[ E^4 \; = \; (c \cdot \gamma^1) \oplus (2c \cdot \gamma^1) \oplus (-3c \cdot \gamma^1) \oplus \epsilon^1 \quad \textrm{for} \; c \in \Z.\]
Therefore, the total Chern class is given by
\[ c(E) = 1-7 c^2 \cdot b^2-6 c^3 \cdot b^3 \; \in H^{\ast}(\CP^5; \Z).\]
In particular, the first Chern class vanishes. As before, we projectivize these bundles to obtain a family $Y_c^{16}$ of $\CP^3$-bundles over $\CP^5$. The computation of the Pontryagin numbers is carried out according to the recipe.

\begin{lem}\label{lemdimsixteen}
The Pontryagin numbers of $Y^{16}_c$ are given by
\[p_1^4[Y^{16}_c] = 768 c^3(12+56c^2), \quad p_1^2p_2[Y^{16}_c]=384c^3(15+56c^2),\]
\[p_1p_3[Y^{16}_c] = 48c^3(42+56c^2), \quad p_2^2[Y^{16}_c]=144 c^3(24+56c^2), \quad p_4[Y^{16}_c]=288c^3.\]
\end{lem}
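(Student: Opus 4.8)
The statement is a characteristic-number computation carried out with the Borel--Hirzebruch recipe recalled above, now with $l=2$ and $k=1$, so that $Y^{16}_c\to\CP^5$ is a $\CP^3$-bundle and $H^\ast(Y^{16}_c;\Z)$ is free over $H^\ast(\CP^5;\Z)$ on $1,a,a^2,a^3$. First I would record that, because $E^4$ splits as a sum of line bundles with Chern roots $cb,\,2cb,\,-3cb,\,0$, the formula $c(\eta_E)=\sum_{i=0}^{4}(1+a)^{4-i}c_i(E)$ is exactly the total Chern class of $\pi^\ast E\otimes L$ with $c_1(L)=a$; equivalently $\eta_E\oplus\underline{\C}\cong\pi^\ast E\otimes L$, so $\eta_E$ has (virtual) Chern roots $r_1=a+cb$, $r_2=a+2cb$, $r_3=a-3cb$, $r_4=a$ and the Leray--Hirsch relation is precisely $r_1r_2r_3r_4=0$, i.e.
\[ a^{4}\;=\;7c^{2}b^{2}a^{2}+6c^{3}b^{3}a .\]
Since adding a trivial summand does not change Pontryagin classes, the splitting (\ref{splitting}) together with $p(T\CP^5)=(1+b^2)^6$ gives the closed form
\[ p(Y^{16}_c)\;=\;(1+b^2)^6\,(1+r_1^2)(1+r_2^2)(1+r_3^2)(1+r_4^2).\]

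From here the plan is purely computational. I would expand the right-hand side to read off $p_1,p_2,p_3,p_4\in H^{\ast}(Y^{16}_c;\Z)$ as polynomials in $a,b$ with coefficients in $\Z[c]$; the most convenient route is $p_j(\eta_E)=e_j(r_1^2,\dots,r_4^2)$, which I would get from the elementary symmetric functions of the roots themselves --- $e_1(r)=4a$, $e_2(r)=6a^2-7c^2b^2$, $e_3(r)=4a^3-14c^2b^2a-6c^3b^3$, $e_4(r)=0$ --- via the standard identities expressing $e_j(r_i^2)$ through the $e_k(r)$ (which simplify here because $e_4(r)=0$; for instance $p_3(\eta_E)=e_3(r)^2$). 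Then I would form the five products $p_1^4,\,p_1^2p_2,\,p_1p_3,\,p_2^2,\,p_4\in H^{16}(Y^{16}_c;\Z)$ and evaluate them on the fundamental class. The evaluation is a reduction modulo the two relations $b^6=0$ (in the base) and $a^4=7c^2b^2a^2+6c^3b^3a$ until each class is a multiple of $a^3b^5$; concretely this is the Gysin pushforward $\pi_\ast(a^3)=1$, $\pi_\ast(a^4)=0$, $\pi_\ast(a^5)=7c^2b^2$, $\pi_\ast(a^6)=6c^3b^3$ (the Segre classes of $E$), followed by integration over $\CP^5$, using the normalization $\langle a^3b^5,[Y^{16}_c]\rangle=1$ from the complex orientation.

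There is no conceptual obstacle; the work is the bookkeeping. The point worth keeping in mind is that the naive top term $\prod_i r_i^2=e_4(r)^2$ vanishes, so that each of the five Pontryagin numbers actually gathers contributions from several lower degrees of $(1+b^2)^6$ and of the $e_j(r_i^2)$, and it is the reduction of the powers $a^4,\dots,a^8$ through the relation that generates the polynomials in $c$, of degree up to $5$. I expect the expansions of $p_1^4$ and $p_1^2p_2$ to be the most error-prone. A convenient internal check is that every resulting Pontryagin number is divisible by $c^3$, which refines the fact that $Y^{16}_0=\CP^3\times\CP^5$ has all Pontryagin numbers zero; as a sample, $p_4(Y^{16}_c)=6b^2\,p_3(\eta_E)=6b^2\,e_3(r)^2$ (the summands $1\cdot p_4(\eta_E)$ and $15b^4\cdot p_2(\eta_E)$ dying against $e_4(r)=0$ and $b^6=0$), and reducing $a^6b^2=6c^3a^3b^5$ while the remaining terms vanish modulo $b^6$ yields $p_4[Y^{16}_c]=288c^3$, exhibiting the mechanism.
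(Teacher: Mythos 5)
Your argument is exactly the Borel--Hirzebruch recipe the paper sets up just before the lemma (splitting $TP(E)=\pi^{\ast}T\CP^{5}\oplus\eta_E$ and $c(\eta_E)=\sum_i(1+a)^{4-i}c_i(E)$); the paper then states the lemma without writing out the computation, which is what you supply, correctly identifying the virtual Chern roots $a+cb,\ a+2cb,\ a-3cb,\ a$, the relation $a^4=7c^2b^2a^2+6c^3b^3a$, and the Gysin/Segre reduction, and your sample check $p_4[Y^{16}_c]=288c^3$ is right.
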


Our family of 20-dimensional examples is similar to the one in dimension 12. We take the rank 4 complex vector bundles
\[ E^4 \; = \; (c \cdot \gamma^1) \oplus \epsilon^3 \quad \textrm{for} \; c \in \Z\]
over $\CP^7$. We denote the projectivizations as $Z^{20}_c$.
\begin{lem}\label{lemdim20}
The Pontryagin numbers of $Z^{20}_c$ are given by
\begin{align*} 
p_1^5[Z^{20}_c] &=-64 c^3 (3 c^4+30 c^2+80), & p_1^3p_2[Z^{20}_c]&=-2 c^3 (39 c^4+480 c^2+1456), \\
p_1^2p_3[Z^{20}_c]&=-3 c^3 (3 c^4+80 c^2+352), &p_1p_2^2[Z^{20}_c]&=-c^3 (27 c^4+456 c^2+1616), \\
 p_1p_4[Z^{20}_c]&=-8 c^3 (3 c^2+29),  & p_2p_3[Z^{20}_c]&=-c^3 (3 c^4+96 c^2+580),\\
p_5[Z^{20}_c]&=-28 c^3.&& \end{align*}
\end{lem}

Finally, it is important to note that the elliptic genus vanishes on our families $X^{12}_c, \; Y^{16}_c$ and $Z^{20}_c$. This follows from a result by Ochanine \cite{Och88}  stating that the elliptic genus is multiplicative in spin fibre bundles.

\subsection{Nonnegative curvature on the families}
Next we show that the examples we will use to prove the theorems admit a metric of nonnegative curvature. We do so in a slightly more general setting.

Let $E$ be a complex vector bundle of rank $k$ over the base $B=\CP^l$ and assume it decomposes as a Whitney sum $E=\bigoplus_{i=1}^k \gamma_i$ of complex line bundles $\gamma_i$. Let $P$ be a principal $\T^k$-bundle such that $E=P\times_{\T^k} \C^k=(P\times \C^k)/\T^k$. Using a theorem of Stewart \cite{Stewart61} it is possible to lift  the standard action of $\SU(l+1)$ on $\CP^l$ to $P$ and we get that $P$ is a homogeneous space $P=(\SU(l+1)\times \T^k)/\rho(\U(l))$ where the first component $\rho_1$ of $\rho$ is a standard embedding such that $\CP^l=\SU(l+1)/\rho_1(\U(l))$ and $\rho_2$ depends on the principal torus bundle. 

Since $\rho_2|_{\SU(l)}$ has to be trivial, we get that \[\hat{P}=\SU(l+1)/\rho(\SU(l))\cong\Sph^{2l+1}\times \T^k.\] 
Let $\hat{E}=\hat{P}\times_{\T^k}\C^k$.  For the associated sphere bundles we get $ \Sph(\hat{E})=\hat{P}\times_{\T^k} \Sph^{2k-1} \cong \Sph^{2l+1}\times \Sph^{2k-1}$ and $\Sph(E)=\Sph(\hat{E})/\Sph^1$. So the projectivized bundle $\mathrm{P}(E)$ is a quotient of $\Sph^{2l+1}\times \Sph^{2k-1}$ by a free, isometric $\T^2$-action and therefore carries a metric of nonnegative curvature.

\section{Proofs of the theorems}

We combine the topological and geometric ingredients to prove Theorem \ref{thmquestionsle20}. It follows from the modular properties of the elliptic genus that its coefficients span a $(k+1)$-dimensional subspace of the dual space of $\Omega_{8k}^{\textrm{SO}} \otimes \Q$ and $\Omega_{8k+4}^{\textrm{SO}} \otimes \Q$, respectively. For the proof it suffices to show that the remaining linear combinations of Pontryagin numbers are unbounded on our families.

\begin{proof}[Proof of Theorem \ref{thmquestionsle20} for $k=3$]
Let $f: \Omega_{12}^{\textrm{SO}} \otimes \Q \rightarrow \Q$ be a linear combination of Pontryagin numbers that is not contained in the span of the coefficients of the elliptic genus. From Thom's work, it is well-known that $\Omega_{12}^{\textrm{SO}} \otimes \Q$ is a three-dimensional $\Q$-vector space.  Using \[\textrm{sign}(M)=\frac{1}{945}\big(62p_3[M]-13p_1p_2[M]+2p_1^3[M]\big)\] and \[\hat{A}(M)=\frac{1}{967680}\big(-16p_3[M]+44p_1p_2[M]-31p_1^3[M]\big)\] it is simple to check that the linear combination $f$ is given by
\[ f([M]) = \lambda_1 \textrm{sign}(M) + \lambda_2 \hat{A}_3(M) + \lambda_3 p_3[M] \quad \textrm{for} \; \lambda_i \in \Q.\]

Moreover, the coefficients of the elliptic genus in dimension 12 are spanned by the signature and the $\hat{A}$-genus and so we conclude that $\lambda_3 \neq 0$.

We now evaluate the linear combination $f$ on the family $X^{12}_{c}$. We recall that the latter is nonnegatively curved and spin for $c$ even.  We also note that by mutiplicativity, the elliptic genus vanishes on $X^{12}_{2c}$. By Lemma \ref{lemdimtwelve}, we conclude that
\[ f([X^{12}_{2c}]) = \lambda_3 p_3[X^{12}_{2c}] = - 8 \lambda_3 c^3\]
and $f$ is unbounded. Thus, we showed that $f$ is unbounded on nonnegatively curved spin manifolds, which is exactly the claim.
\end{proof}

\begin{proof}[Proof of Theorem \ref{thmquestionsle20} for $k=4$]
The proof goes along the same lines as the previous case. In this dimension the coefficients of the elliptic genus are spanned by the signature, the $\hat{A}$-genus and the index of the twisted Dirac operator $\hat{A}(M; T_{\C}M)$. Moreover, $\Omega_{16}^{\textrm{SO}} \otimes \Q$ is a five-dimensional $\Q$-vector space and $f$ is expressed by a linear combination
\[ f([M])= \lambda_1 \textrm{sign}(M) + \lambda_2 \hat{A}_4(M) + \lambda_3 \hat{A}_4 (M; T_{\C} M) + \lambda_4 p_1^4[M]+ \lambda_5 p_4[M]\]
for some $\lambda_i \in \Q$. By the multiplicativity in spin fibre bundles, the elliptic genus vanishes on $Y^{16}_c$ and in view of Lemma \ref{lemdimsixteen}, the evaluation yields
\[ f([Y^{16}_c]) = c^3 (768(12+56c^2)\lambda_4  + 288 \lambda_5).\]
Clearly, this is unbounded for $(\lambda_4, \lambda_5) \neq (0,0)$ and the claim follows.
\end{proof}

\begin{proof}[Proof of Theorem \ref{thmquestionsle20} for $k=5$]
In this dimension a linear combination of Pontryagin numbers $f: \Omega_{20}^{\textrm{SO}} \otimes \Q \rightarrow \Q$ has the form 
\begin{align*} f([M])&= \lambda_1 \textrm{sign}(M) + \lambda_2 \hat{A}_5(M) + \lambda_3  \hat{A}_5(M;T_{\C}M)+ \lambda_4 p_5[M] \\& \qquad+ \lambda_5 p_1p_4[M]+\lambda_6 p_2 p_3[M] + \lambda_7 p_1^2 p_3[M]
\end{align*}
with coefficients $\lambda_i\in  \Q$. Evaluating on $Z^{20}_{2c}$, we obtain by Lemma \ref{lemdim20}
\begin{align*}f([Z^{20}_{2c}])&=2^7 \cdot c^7 (-3 \lambda _6-9 \lambda _7)+ 2^5 \cdot c^5 (-24 \lambda _5-96 \lambda _6-240 \lambda _7)\\&\qquad+ 2^3 \cdot c^3 (-28 \lambda _4-232 \lambda _5-580 \lambda _6-1056 \lambda _7).\end{align*}
By the product formula for the Pontryagin classes we compute 
\begin{align*}f([X^{12}_{2c} \times \HP^2])&=2^3 \cdot c^3 (-7 \lambda _4-46 \lambda _5-73 \lambda _6-108 \lambda _7).\end{align*}
Both families are spin and nonnegatively curved. If $f([Z^{20}_{2c}])$ and $f([X^{12}_{2c} \times \HP^2])$ are bounded, then all coefficients of these polynomials in $c$ must vanish, which implies $\lambda_4=\lambda_5=\lambda_6=\lambda_7=0$. So a bounded linear combination $f$ has to be contained in the span of the coefficients of the elliptic genus.\end{proof}

As a consequence, we are able to prove Theorem \ref{infinite cobordism type}. The proof follows from the polynomial structure of the rational oriented cobordism ring.

\begin{proof}[Proof of Theorem \ref{infinite cobordism type}]
For a given dimension, we need to find a family of nonnegatively curved spin manifolds with mutually distinct cobordism type. We recall that the rational cobordism type is uniquely determined by the Pontryagin numbers. In dimensions 12 and 16 such families are given by $X^{12}_{2c}$ and $Y^{16}_c$ and the claim follows from Theorem \ref{thmquestionsle20}.

For the dimensions $4k \geq 20$ the family $X^{12}_{2c} \times \HP^{k-3}$ has the desired properties. In fact, this family is spin and nonnegatively curved via the product metric. On the other hand, it is  well-known that
\[ \Omega_{\ast}^{\textrm{SO}} \otimes \Q = \Q[[X_2(4)], [\HP^2], [\HP^3], \ldots].\]
In other words, the $K3$-surface $X_2(4)$ and the quaternionic projective spaces form a sequence generating the rational cobordism ring as a polynomial ring. Since $\Omega_{\ast}^{\textrm{SO}} \otimes \Q$ has the structure of a polynomial ring, multiplication with the element $[\HP^{k-3}]$ is injective for $k \geq 5$. Hence, the cobordism types of $X^{12}_{2c} \times \HP^{k-3}$ are mutually distinct, which completes the proof.
\end{proof}


\bibliographystyle{alpha}

\bibliography{references.bib}

\begin{thebibliography}{HBJ92}

\bibitem[BH58]{BH58}
Armand Borel and Friedrich Hirzebruch.
\newblock Characteristic classes and homogeneous spaces. {I}.
\newblock {\em Amer. J. Math.}, 80:458--538, 1958.

\bibitem[Des05]{Des05}
Anand Dessai.
\newblock Characteristic numbers of positively curved {S}pin-manifolds with
  symmetry.
\newblock {\em Proc. Amer. Math. Soc.}, 133(12):3657--3661, 2005.

\bibitem[Des07]{Des07}
Anand Dessai.
\newblock Obstructions to positive curvature and symmetry.
\newblock {\em Advances in Math.}, 210(2):560--577, 2007.

\bibitem[DT07]{DW07}
Anand Dessai and Wilderich Tuschmann.
\newblock Nonnegative curvature and cobordism type.
\newblock {\em Math. Z.}, 257(1):7--12, 2007.

\bibitem[HBJ92]{HBJ92}
Friedrich Hirzebruch, Thomas Berger, and Rainer Jung.
\newblock {\em Manifolds and modular forms}.
\newblock Aspects of Mathematics, E20. Friedr. Vieweg \& Sohn, Braunschweig,
  1992.
\newblock With appendices by Nils-Peter Skoruppa and by Paul Baum.

\bibitem[HS90]{HS90}
Friedrich Hirzebruch and Peter Slodowy.
\newblock Elliptic genera, involutions, and homogeneous spin mani\-folds.
\newblock {\em Geom. Dedicata}, 35(1-3):309--343, 1990.

\bibitem[Kot10]{Kot10}
Dieter Kotschick.
\newblock Pontryagin numbers and nonnegative curvature.
\newblock {\em J. Reine Angew. Math.}, 646:135--140, 2010.

\bibitem[Lot00]{Lott00}
John Lott.
\newblock {$\hat A$}-genus and collapsing.
\newblock {\em J. Geom. Anal.}, 10(3):529--543, 2000.

\bibitem[MS74]{MS74}
John~W. Milnor and James~D. Stasheff.
\newblock {\em Characteristic classes}.
\newblock Princeton University Press, Princeton, N. J., 1974.
\newblock Annals of Mathematics Studies, No. 76.

\bibitem[Och88]{Och88}
Serge Ochanine.
\newblock Genres elliptiques \'equivariants.
\newblock In {\em Elliptic curves and modular forms in algebraic topology
  ({P}rinceton, {NJ}, 1986)}, volume 1326 of {\em Lecture Notes in Math.},
  pages 107--122. Springer, Berlin, 1988.

\bibitem[Sin93]{Sin93}
Wilhelm Singhof.
\newblock On the topology of double coset manifolds.
\newblock {\em Math. Ann.}, 297(1):133--146, 1993.

\bibitem[Ste61]{Stewart61}
T.~E. Stewart.
\newblock Lifting group actions in fibre bundles.
\newblock {\em Ann. of Math. (2)}, 74:192--198, 1961.

\bibitem[Wei13]{Wei13}
Nicolas Weisskopf.
\newblock Positive curvature and the elliptic genus.
\newblock arXiv:1305.5175 [math.DG], 2013.

\bibitem[Wei14]{Wei14}
Nicolas Weisskopf.
\newblock {\em Positive Curvature and the Elliptic Genus}.
\newblock PhD thesis, University of Fribourg, 2014.

\end{thebibliography}

\end{document}